\theoremstyle{plain}
\newtheorem{theorem}{Theorem}[section]
\newtheorem{corollary}[theorem]{Corollary}
\theoremstyle{definition}
\numberwithin{equation}{section}
\date{}
\begin{document}
\title{Determination of time-like helices from intrinsic equations in Minkowski 3-Space}

\author{Ahmad T. Ali\\
Mathematics Department\\
 Faculty of Science, Al-Azhar University\\
 Nasr City, 11448, Cairo, Egypt\\
E-mail: \textit{atali71@yahoo.com}\\
\vspace*{0.5cm}\\
Melih Turgut \footnote{Corresponding author.}\\
Department of Mathematics, \\
Buca Educational Faculty, Dokuz Eyl\"{u}l University,\\
35160 Buca, Izmir, Turkey\\
E-mail: \textit{melih.turgut@gmail.com}}

\maketitle
\begin{abstract}
In this paper, position vectors of a time-like curve with respect to standard frame of Minkowski space E$^3_1$ are studied in terms of Frenet equations. First, we prove that position vector of every time-like space curve in Minkowski space E$^3_1$ satisfies a vector differential equation of fourth order. The general solution of mentioned vector differential equation has not yet been found. By special cases, we determine the parametric representation of the general helices from the intrinsic equations (i.e. curvature and torsion are functions of arc-length) of the time-like curve. Moreover, we give some examples to illustrate how to find the position vector from the intrinsic equations of general helices.\\

\textbf{M.S.C. 2000}: 53C40, 53C50\\
\textbf{Keywords}: Classical differential geometry; Frenet equations;  general helix; Intrinsic equations.
\end{abstract}

\section{Introduction}
In the local differential geometry, we think of curves as a geometric set of points, or locus. Intuitively, we are thinking of a curve as the path traced out by a particle moving in E$^3$. So, investigating position vectors of the curves is a classical aim to determine behavior of the particle (or the curve, i.e.). There exists a vast literature on this subject, for instance \cite{chen}, \cite{Il1}, \cite{Il2}, \cite{Il3}, \cite{Tur}. The aim of these works is to obtain position vectors of the curves with respect to Frenet frame. And, in the classical differential geometry, it is well-known that determining position vector of an arbitrary curve according to standard frame is not easy. In a recent study, \cite{Yil} obtain position vectors of space-like $W-$curves according to standard frame of E$_1^3$ by means of vector differential equations.

A curve of constant slope or general helix is defined by the property that the tangent lines make a constant angle with a fixed direction. A necessary and sufficient condition that a curve to be general helix is that ratio of curvature to torsion be constant. Indeed, a helix is a special case of the gerenal helix. If both curvature and torsion are non-zero constants, it is called a helix or only a $W-$curve.

Helices arise in nanosprings, carbon nanotubes, $\alpha-$helices, DNA double and collagen triple helix, the double helix shape is commonly associated with DNA, since the double helix is structure of DNA, \cite{cci}. This fact was published for the first time by Watson and Crick in 1952 (see \cite{Wc}). They constructed a molecular model of DNA in which there were two complementary, antiparallel (side-by-side in opposite directions) strands of the bases guanine, adenine, thymine and cytosine, covalently linked through phosphodiesterase bonds (for details, see \cite{cci}, \cite{Cgj}, \cite{cook}).

All helices ($W-$curves) in E$^3_1$ are completely classified by Walfare in \cite{walfare}. For instance, the only planar space-like degenerate helices are circles and hyperbolas. In \cite{Il3}, the authors investigated position vectors of a time-like and a null helix ($W-$curve) with respect to Frenet frame.

In this work, we use vector differential equations established by means of Frenet equations in Minkowski space E$^3_1$ to determine position vectors of the time-like curves according to standard frame of E$^3_1$. We obtain position vectors of a time-like general helix with respect to standard frame of E$^3_1$. We hope these results will be helpful to mathematicians who are specialized on mathematical modeling.

\section{Preliminaries}
To meet the requirements in the next sections, here, the basic elements
of the theory of curves in the space $E_{1}^{3}$ are briefly presented (A
more complete elementary treatment can be found in \cite{Onei}.)\\

The Minkowski 3-space $E_{1}^{3}$ is the Euclidean 3-space $E^{3}$
provided with the standard flat metric given by
\begin{center}
$g =-dx_{1}^{2}+dx_{2}^{2}+dx_{3}^{2},$
\end{center}
where $(x_{1},x_{2},x_{3})$ is a rectangular coordinate system of $E_{1}^{3}$
.
Since $g$ is an indefinite metric, recall that a vector $v\in E_{1}^{3}$ can have one of three Lorentzian characters: it can be space-like if $g(v,v)>0$ or $v=0$, time-like if $g(v,v)<0$ and null if $g(v,v)=0$ and $v\neq 0$. Similarly, an arbitrary curve $\varphi =\varphi (s)$ in $E_{1}^{3}$ can locally be space-like, time-like or null (light-like), if all of its velocity vectors $\varphi ^{\prime }$ are respectively space-like, time-like or null (light-like), for every $s\in I\subset R$. The pseudo-norm of an arbitrary vector $a\in E_{1}^{3}$ is given by $\left\Vert a\right\Vert=\sqrt{\left\vert g(a,a) \right\vert }$. $\varphi $
is called an unit speed curve if\ velocity vector $v$ of $\varphi $
satisfies $\left\Vert v\right\Vert =1$. For vectors $v,w\in $ $E_{1}^{3}$ it is said to be orthogonal if and only if $g(v,w) =0$.\\

Denote by $\left\{ T,N,B\right\} $ the moving Frenet frame along the curve $\varphi $ in the space $E_{1}^{3}$. For an arbitrary curve $\varphi $ with first and second curvature, $\kappa $ and $\tau $ in the space $E_{1}^{3}$, the following Frenet formulae are given in \cite{Il3}:\\

If $\varphi $ is a time-like curve, then the Frenet formulae read
\begin{eqnarray}\label{u2}
\left[
\begin{array}{c}
T^{\prime } \\
N^{\prime } \\
B^{\prime }%
\end{array}%
\right] =\left[
\begin{tabular}{lll}
$0$ & $\kappa $ & $0$ \\
$\kappa $ & $0$ & $\tau $ \\
$0$ & $-\tau $ & $0$%
\end{tabular}%
\right] \left[
\begin{array}{c}
T \\
N \\
B%
\end{array}%
\right],
\end{eqnarray}
where
\begin{center}
$g(T,T)=-1$, $g(N,N)=g(B,B)=1$,\\
$g(T,N)=g(T,B)=g(T,N)=g(N,B)=0.$
\end{center}

Recall that an arbitrary curve is called a $W-$curve if it has constant Frenet curvatures \cite{Il2}. And, from the view of Differential Geometry, a helix is a geometric curve with non-vanishing constant curvature $\kappa$ and non-vanishing constant torsion $\tau$ \cite{Il3}.

\section{Main Results for Time-like Curves}
In this section, first, we adapt important theorems in the classical differential geometry of the curves to time-like curves of Minkowski 3-space.\\

Lipschutz \cite{lips} stated and proved the following two important theorem in Euclidean space E$^3$. Here we state the same theorems in Minkowski space E$_1^3$ but without proof.

\begin{theorem}\label{th-main} A curve is defined uniquely by its curvature and torsion as function of a natural parameters.
\end{theorem}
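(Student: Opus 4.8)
The plan is to carry out the classical argument for the uniqueness part of the fundamental theorem of curves, with the sign bookkeeping adjusted to the signature $(-,+,+)$ imposed by \eqref{u2}. Here ``defined uniquely'' is to be read as ``determined up to a rigid motion of $E_1^3$'', i.e. up to an isometry of the Minkowski metric $g$. So let $\varphi$ and $\widetilde\varphi$ be two unit-speed time-like curves on a common interval $I$ having the same curvature $\kappa=\kappa(s)$ and the same torsion $\tau=\tau(s)$, with Frenet frames $\{T,N,B\}$ and $\{\widetilde T,\widetilde N,\widetilde B\}$; fix a point $s_0\in I$. Since the two frames are positively oriented Lorentzian orthonormal frames whose tangent vectors $T(s_0)$ and $\widetilde T(s_0)$ are unit time-like (and, after possibly reversing the orientation of one curve, lie in the same time-cone), there is a Lorentzian linear isometry carrying $\{\widetilde T(s_0),\widetilde N(s_0),\widetilde B(s_0)\}$ onto $\{T(s_0),N(s_0),B(s_0)\}$; composing it with the translation that takes $\widetilde\varphi(s_0)$ to $\varphi(s_0)$ produces a rigid motion $\Phi$ of $E_1^3$. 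Replacing $\widetilde\varphi$ by $\Phi\circ\widetilde\varphi$, which changes neither the arc-length parameter nor $\kappa$ and $\tau$ because $\Phi$ preserves $g$, we may henceforth assume $\varphi(s_0)=\widetilde\varphi(s_0)$ and that the two Frenet frames agree at $s_0$.

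The core of the proof is then a single appeal to uniqueness for linear systems of ordinary differential equations. Reading \eqref{u2} as a system, the triple $Y=(T,N,B)$ of frame vector fields satisfies $Y'=\mathcal K(s)\,Y$, where $\mathcal K(s)$ is the $3\times 3$ coefficient matrix in \eqref{u2}, a matrix that depends only on $\kappa(s)$ and $\tau(s)$; the triple $\widetilde Y=(\widetilde T,\widetilde N,\widetilde B)$ satisfies the same system. Since $Y(s_0)=\widetilde Y(s_0)$ by the previous step, the basic uniqueness theorem for linear systems of ordinary differential equations forces $Y\equiv\widetilde Y$ on all of $I$; in particular $T\equiv\widetilde T$. (The companion existence statement is obtained along the same lines: given smooth functions $\kappa>0$ and $\tau$, one solves $Y'=\mathcal K(s)\,Y$ starting from a Lorentzian orthonormal initial frame, uses a second ODE-uniqueness argument to see that the Gram matrix of $Y$ stays constant, so $Y$ remains a Lorentzian orthonormal frame, and then integrates its first component to recover the curve.)

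It remains to pass from frames back to curves. From $\varphi'=T\equiv\widetilde T=\widetilde\varphi'$ we deduce that $\varphi-\widetilde\varphi$ is constant on $I$, and evaluating at $s_0$ shows that this constant vanishes; hence $\varphi\equiv\widetilde\varphi$, which means precisely that the two original curves differ by the rigid motion $\Phi$. That is the assertion of the theorem.

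I expect the real obstacle to lie in the alignment step and its Lorentzian subtleties, namely verifying that a rigid motion of $E_1^3$ transports the whole Frenet apparatus in the obvious way and that two positively oriented Lorentzian orthonormal frames whose time-like members share a cone are related by exactly one such motion. I would also point out that the familiar Euclidean shortcut---showing $T\cdot T^{*}+N\cdot N^{*}+B\cdot B^{*}$ is constant and therefore identically $3$, so each term equals $1$---does not carry over verbatim: the analogous quantity $-g(T,\widetilde T)+g(N,\widetilde N)+g(B,\widetilde B)$ is still constant (the $\kappa$-terms and the $\tau$-terms coming from the two Frenet systems again cancel in pairs), but its being equal to $3$ no longer forces the frames to coincide, since a parabolic element of $SO^{+}(1,2)$ also has trace $3$. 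This is why I would route the argument through ODE uniqueness rather than through that identity.
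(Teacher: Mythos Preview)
Your argument is correct and is the standard Lorentzian adaptation of the classical uniqueness proof: align the two Frenet frames at a single parameter value by a rigid motion of $E_1^3$, invoke uniqueness for the linear system $Y'=\mathcal K(s)\,Y$ determined by \eqref{u2}, and then integrate $T\equiv\widetilde T$. Your remark that the Euclidean ``trace equals $3$'' shortcut breaks down here is a genuine point: the quantity $-g(T,\widetilde T)+g(N,\widetilde N)+g(B,\widetilde B)$ is indeed constant along the curve, but a null rotation in $SO^+(1,2)$ also has trace $3$, so constancy of this scalar no longer pins down the frame. Routing the argument through ODE uniqueness, as you do, is the clean way around this.

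As for comparison with the paper: there is nothing to compare. The paper explicitly states this theorem (and the companion existence theorem) \emph{without proof}, referring to Lipschutz for the Euclidean version and simply asserting that the same statements hold in $E_1^3$. So your write-up supplies what the paper omits; if anything, you have been more careful than the paper about the Lorentzian issues (time-cone alignment, failure of the trace argument) that make the Minkowski case not quite a verbatim transcription of the Euclidean one.
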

The equations
$$
\kappa=\kappa(s),\,\,\,\,\,\tau=\tau(s)
$$
which give the curvature and torsion of a curve as functions of $s$ are called the natural or intrinsic equations of a curve, for they completely define the curve.

We observe that the Frenet equations form a system of three vector differential equations of the first order in $T, N$ and $B$. It is reasonable to ask, therefore, given arbitrary continuous functions $\kappa$ and $\tau$, whether or not there exist solutions $T,N,B$ of the Frenet equations, and hence, since $\psi^{\prime}=T$, a curve
$$
\psi=\int Tds+C
$$
which the prescribed curvature and torsion. The answer is in the affirmative and is given by

\begin{theorem}{\bf (Fundamental existence and uniqueness theorem for space curve).}\label{th-main} Let $\kappa(s)$ $\tau(s)$ be arbitrary continuous function on $a\leq s \leq b$. Then there exists, except for position in space, one and only one timelike curve $C$ for which $\kappa(s)$ is the curvature, $\tau(s)$ is the torsion and $s$ is a natural parameter along $C$.
\end{theorem}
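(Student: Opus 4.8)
The plan is to mimic the classical Euclidean argument, adapting it to the Lorentzian signature encoded in the Frenet system \eqref{u2}. First I would treat existence. Regarding \eqref{u2} as a linear system of nine scalar ODEs for the components of $T,N,B$ with continuous (hence locally Lipschitz in the unknowns, since the coefficient matrix depends only on $s$) coefficients on $[a,b]$, the standard linear existence theorem gives a unique solution $\{T(s),N(s),B(s)\}$ once we prescribe an initial frame $\{T_0,N_0,B_0\}$ at $s=a$ chosen to be pseudo-orthonormal with $g(T_0,T_0)=-1$, $g(N_0,N_0)=g(B_0,B_0)=1$ and the mixed products zero.

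\medskip
The key step is to show this prescribed character is propagated for all $s$: the solution frame remains pseudo-orthonormal. For this I would form the six scalar functions
$$
a_{11}=g(T,T),\ a_{22}=g(N,N),\ a_{33}=g(B,B),\ a_{12}=g(T,N),\ a_{13}=g(T,B),\ a_{23}=g(N,B),
$$
differentiate each using \eqref{u2} and the compatibility of $g$ with differentiation, and observe that one obtains a \emph{homogeneous} linear first-order system in these six functions. Since the constant functions $a_{11}=-1$, $a_{22}=a_{33}=1$, $a_{12}=a_{13}=a_{23}=0$ solve this system and match the initial data at $s=a$, uniqueness for linear ODEs forces the solution frame to satisfy the Frenet relations in \eqref{u2} identically on $[a,b]$. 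In particular $g(T,T)=-1<0$, so defining $\psi(s)=\int_a^s T(u)\,du + C$ yields a timelike curve; $s$ is a natural parameter because $\|\psi'\|=\|T\|=1$, and a direct check that $\psi''=T'=\kappa N$ etc. confirms $\kappa$ is its curvature and $\tau$ its torsion.

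\medskip
For uniqueness up to position, suppose $C$ and $\bar C$ are two timelike curves with the same $\kappa(s),\tau(s)$ and natural parameter $s$, with Frenet frames $\{T,N,B\}$ and $\{\bar T,\bar N,\bar B\}$. After applying a Lorentz transformation together with a translation so that the two frames agree at $s=a$, both frames solve the same linear Frenet system with the same initial data, so by uniqueness they coincide for all $s$; then $\psi'=T=\bar T=\bar\psi'$ gives $\psi=\bar\psi$ after matching the integration constant. The main obstacle I anticipate is the propagation step: one must be careful that the auxiliary system for the inner products is genuinely closed and homogeneous under the Lorentzian Frenet equations (the signs in \eqref{u2} differ from the Euclidean case, and the $-\tau$ entry matters), and that the group used to align the initial frames is the Lorentz group $O(1,2)$ rather than $O(3)$, which requires knowing that any two pseudo-orthonormal frames of the same causal type are related by such a transformation.
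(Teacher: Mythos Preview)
Your proposal is correct and follows the standard route (linear ODE existence for the Frenet system, propagation of the pseudo-orthonormality relations via a closed homogeneous system in the six inner products, then integration of $T$; uniqueness by aligning initial frames with an isometry and invoking ODE uniqueness). The computations you outline do close up under the signs in \eqref{u2}: for instance $a_{12}'=\kappa\,a_{22}+\kappa\,a_{11}+\tau\,a_{13}$ vanishes on the constant solution $a_{11}=-1$, $a_{22}=1$, $a_{13}=0$, and similarly for the others, so the propagation step goes through.

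The comparison with the paper is short: the paper does \emph{not} prove this theorem. It explicitly says that the two theorems are stated ``but without proof,'' merely citing Lipschutz for the Euclidean version. So you have supplied an argument where the authors chose to omit one. Your only caveats---that the auxiliary system must be checked to be closed under the Lorentzian Frenet signs, and that the aligning isometry must be taken in $O(1,2)$ rather than $O(3)$---are exactly the points that distinguish the Minkowski case from the Euclidean one, and both are easily verified. One small wording issue: the theorem's phrase ``except for position in space'' should be read as ``up to an isometry of $E_1^3$'' (Lorentz transformation plus translation), which is how you interpret it; a pure translation alone would not suffice to match arbitrary initial frames.
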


The problem of the determination of parametric representation of the position vector of an arbitrary space curve according to the intrinsic equations is still open in the Euclidean space E$^3$ and in the Minkowski spae $E_1^3$ \cite{eisenh, lips}. This problem is not easy to solve in general case. We solved this problem in the case of the general helix ($\dfrac{\tau}{\kappa}$ is constant) in Minkowski space E$_1^3$.\\

In the light of above statements, first, we give:
\begin{theorem}\label{th-main} Let $\psi=\psi(s)$ be a time-like unit speed curve. Then, position $\psi$ satisfies a vector differential forth order as follows
\begin{equation}\label{u21}
\dfrac{d}{ds}\Big[\dfrac{1}{\tau}\dfrac{d}{ds}\Big(\dfrac{1}{\kappa}\dfrac{d^2\psi}{ds^2}\Big)\Big]+
\Big(\dfrac{\tau}{\kappa}-\dfrac{\kappa}{\tau}\Big)\dfrac{d^2\psi}{ds^2}-
\dfrac{d}{ds}\Big(\dfrac{\kappa}{\tau}\Big)\dfrac{d\psi}{ds}=0.
\end{equation}
\end{theorem}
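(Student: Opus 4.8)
The plan is to read the Frenet vectors $N$ and $B$ off as explicit expressions in the derivatives of $\psi$, and then to differentiate the expression for $B$ one last time, substituting $B'=-\tau N$. No integration or existence argument is needed: the theorem asserts only that the position vector of a time-like unit speed curve \emph{satisfies} \eqref{u21}, so a direct computation suffices.

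First I would use that $\psi$ is unit speed, so $\psi'=T$, and then the first Frenet equation in \eqref{u2} gives $\psi''=T'=\kappa N$. Since $\kappa$ is assumed nowhere zero this yields
\begin{equation*}
N=\frac{1}{\kappa}\,\frac{d^2\psi}{ds^2}.
\end{equation*}
Differentiating this identity and invoking the second Frenet equation $N'=\kappa T+\tau B=\kappa\,\psi'+\tau B$, one solves for $B$ (using $\tau\neq 0$):
\begin{equation*}
B=\frac{1}{\tau}\Big[\frac{d}{ds}\Big(\frac{1}{\kappa}\frac{d^2\psi}{ds^2}\Big)-\kappa\,\frac{d\psi}{ds}\Big].
\end{equation*}

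Next I would differentiate this expression for $B$ once more and substitute the third Frenet equation in the form $B'=-\tau N=-\dfrac{\tau}{\kappa}\,\dfrac{d^2\psi}{ds^2}$. Expanding $\dfrac{d}{ds}\big(\dfrac{\kappa}{\tau}\,\psi'\big)$ by the Leibniz rule produces a term $\dfrac{\kappa}{\tau}\psi''$, which combines with the $-\dfrac{\tau}{\kappa}\psi''$ coming from $B'$ to give the coefficient $\big(\dfrac{\tau}{\kappa}-\dfrac{\kappa}{\tau}\big)$ of $\psi''$; collecting all terms on one side then yields exactly \eqref{u21}.

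The argument is entirely elementary, so I do not anticipate a genuine obstacle; the care needed is only in the bookkeeping. In particular one must use the \emph{time-like} Frenet equations \eqref{u2}, whose sign pattern differs from the Euclidean and the space-like cases, so that the sign in $B'=-\tau N$ is the one that feeds into the cross term; one must invoke the standing hypothesis that $\kappa$ and $\tau$ are nowhere vanishing, which is what legitimizes the successive divisions by $\kappa$ and $\tau$; and one must carry out the final Leibniz expansion carefully, since that is precisely the step that manufactures the coefficient $\dfrac{\tau}{\kappa}-\dfrac{\kappa}{\tau}$.
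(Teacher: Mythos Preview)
Your argument is correct and is essentially the paper's own proof: express $N$ from $T'=\kappa N$, solve the second Frenet equation for $B$, differentiate and substitute $B'=-\tau N$, then collect terms. The only cosmetic difference is that the paper carries out the computation in terms of $T$ and substitutes $T=\psi'$ at the very end, whereas you work directly with $\psi',\psi''$ from the outset.
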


\begin{proof}
Let $\psi =\psi (s)$ be an unit speed time-like curve with nonvanishing curvature and torsion. If we substitute first equation of (\ref{u2}) to second equation of (\ref{u2}), we have
\begin{equation}\label{u3}
B=\frac{d}{ds}\left( \frac{1}{\kappa }\frac{dT}{ds}\right)- \frac{\kappa }{\tau }T.
\end{equation}
Differentiating of (\ref{u3}) and using in third equation of (\ref{u2}), we write
\begin{equation}
\frac{d}{ds}\left[ \frac{1}{\tau }\frac{d}{ds}\left( \frac{1}{\kappa }\frac{dT}{ds}%
\right) \right] +\left(\frac{\tau }{\kappa }- \frac{\kappa }{\tau }
\right) \frac{dT}{ds}- \frac{d}{ds}\left( \frac{\kappa }{\tau }
\right) T=0.
\end{equation}
Denoting $\dfrac{d\psi }{ds}=T$, we have the following vector differential
equation of fourth order
\begin{equation}\label{u4}
\frac{d}{ds}\left[ \frac{1}{\tau }\frac{d}{ds}\left( \frac{1}{\kappa }\frac{d^{2}\psi }{ds^{2}}\right) \right] +\left(\frac{\tau }{\kappa }- \frac{\kappa }{\tau }\right) \frac{d^{2}\psi }{ds^{2}}- \frac{d}{ds}\left( \frac{
\kappa }{\tau }\right) \frac{d\psi }{ds}=0.
\end{equation}
\end{proof}

If we put $\tau(s)=\dfrac{\kappa(s)}{f(s)}$, the equation (\ref{u4}) takes the following simple form
\begin{equation}\label{u5}
\frac{d}{d\theta}(f\,\frac{d^2T}{d\theta^2})+(\frac{1-f^2}{f})\frac{dT}{d\theta}-
\frac{df}{d\theta}T=0,\,\,\,\,f=f(\theta),\,\,\,\,\theta=\int\kappa(s)ds.
\end{equation}

By means of solution of the above equation, position vector of an arbitrary space curve can be determined. However, the general solution of it has not been found. So, we investigate special cases.
\begin{theorem}\label{th-main} The position vector of a time-like general helix can be computed in the natural parameter form
\begin{equation}\label{u211}
\psi(s)=\sinh[\alpha]\int\Big(
\coth[\alpha],\cos\Big[\int\mathrm{csch}[\alpha]\kappa(s)ds\Big],
\sin\Big[\int\mathrm{csch}[\alpha]\kappa(s)ds\Big]\Big)ds+C
\end{equation}
or in the parametric form
\begin{equation}\label{u212}
\psi(\phi)=\int\dfrac{\sinh^2[\alpha]}{\kappa(\phi)}\Big(\coth[\alpha],\cos[\phi], \sin[\phi]\Big)d\phi+C,\,\,\,\phi=\mathrm{csch}[\alpha]\int\kappa(s)ds.
\end{equation}
\end{theorem}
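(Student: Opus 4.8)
The plan is to feed the defining property of a general helix into the fourth-order equation \eqref{u5}. For a general helix $f=\kappa/\tau$ is constant, so the term $\tfrac{df}{d\theta}\,T$ in \eqref{u5} vanishes and the equation collapses to the linear constant-coefficient ODE $\tfrac{d^{3}T}{d\theta^{3}}+\tfrac{1-f^{2}}{f^{2}}\,\tfrac{dT}{d\theta}=0$, where $\theta=\int\kappa(s)\,ds$. I would record the constant as $\tau/\kappa=\coth[\alpha]$, i.e. $f=\tanh[\alpha]$; then $\tfrac{1-f^{2}}{f^{2}}=\mathrm{csch}^{2}[\alpha]$ and the general solution is $T(\theta)=A+B\cos\!\big(\mathrm{csch}[\alpha]\,\theta\big)+D\sin\!\big(\mathrm{csch}[\alpha]\,\theta\big)$ for constant vectors $A,B,D\in E_{1}^{3}$.

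Next I would pin down $A,B,D$ from the Frenet system. Since $\theta'=\kappa$, differentiating \eqref{u2} with respect to $\theta$ gives $\tfrac{dT}{d\theta}=N$ and $\tfrac{dN}{d\theta}=T+\coth[\alpha]\,B$; hence the identities $g(T,T)=-1$ and $g\!\big(\tfrac{dT}{d\theta},\tfrac{dT}{d\theta}\big)=1$ (consistent also with $g\!\big(\tfrac{d^{2}T}{d\theta^{2}},\tfrac{d^{2}T}{d\theta^{2}}\big)=\mathrm{csch}^{2}[\alpha]$) must hold identically in $\theta$. Substituting the solution and matching the coefficients of $1,\ \cos,\ \sin,\ \cos^{2},\ \sin^{2},\ \sin\cos$ forces $A,B,D$ to be mutually $g$-orthogonal, with $B,D$ space-like of common pseudo-norm $\sinh[\alpha]$ and $A$ time-like of pseudo-norm $\cosh[\alpha]$. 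Invoking the fundamental existence and uniqueness theorem stated above (the curve is determined up to a Lorentzian rigid motion), I may normalise the standard frame so that $A=(\cosh[\alpha],0,0)$, $B=(0,\sinh[\alpha],0)$, $D=(0,0,\sinh[\alpha])$, which gives $T=\sinh[\alpha]\,\big(\coth[\alpha],\cos\phi,\sin\phi\big)$ with $\phi=\mathrm{csch}[\alpha]\int\kappa(s)\,ds$.

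It then only remains to integrate: $\psi=\int T\,ds+C$ is exactly the natural-parameter form \eqref{u211}, and the substitution $ds=\tfrac{\sinh[\alpha]}{\kappa}\,d\phi$ arising from $d\phi=\mathrm{csch}[\alpha]\,\kappa\,ds$ turns it into the parametric form \eqref{u212}.

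I expect the main obstacle to be the constant-determination step: one must check carefully that the algebraic relations squeezed out of $g(T,T)=-1$ and $g\!\big(\tfrac{dT}{d\theta},\tfrac{dT}{d\theta}\big)=1$ are precisely the orthonormality-type conditions claimed, and keep the causal-character and sign bookkeeping consistent. Note this is the regime $\tau^{2}>\kappa^{2}$, in which $\mathrm{csch}^{2}[\alpha]>0$ and the solution is genuinely circular (equivalently, the helix axis $\tau T+\kappa B$ is time-like); the other causal types of the axis would replace the circular functions by hyperbolic ones and are not captured by \eqref{u211}--\eqref{u212}. A secondary point is to use the uniqueness theorem cleanly to normalise the frame instead of carrying three undetermined vectors through the final integration.
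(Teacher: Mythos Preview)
Your argument is correct and follows the same overall strategy as the paper: specialise \eqref{u5} to constant $f$, solve the resulting linear constant-coefficient ODE for $T$, pin down the integration constants from the Frenet data, and then integrate $T$. The execution, however, differs in two respects worth noting. First, you work with vector-valued constants $A,B,D$ and use both $g(T,T)=-1$ and $g\!\big(\tfrac{dT}{d\theta},\tfrac{dT}{d\theta}\big)=g(N,N)=1$; matching coefficients yields directly that $A,B,D$ are mutually $g$-orthogonal with $g(B,B)=g(D,D)=\sinh^{2}\alpha$ and $g(A,A)=-\cosh^{2}\alpha$, after which a Lorentz motion normalises the frame. The paper instead fixes the axis along $e_{1}$ from the start, writes $T$ in components, imposes only $g(T,T)=-1$, and solves the resulting five algebraic equations in six unknowns with the aid of \textsc{Mathematica}. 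Your route is shorter and avoids the symbolic computation. Second, by setting $\tau/\kappa=\coth\alpha$ (equivalently $f=\tanh\alpha$) at the outset you assume $|\tau|>|\kappa|$ so that $(1-f^{2})/f^{2}=\mathrm{csch}^{2}\alpha>0$ and the solutions are trigonometric; the paper keeps the sign of $1/a^{2}-1$ open, analyses the hyperbolic branch separately, and shows it admits no unit time-like solution---this case analysis is precisely what establishes Corollary~3.5. Your closing remark acknowledges this regime restriction, but the paper's treatment actually proves the non-existence in the complementary case rather than merely excluding it by parametrisation.
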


\begin{proof}
If $\psi$ is a general helix whose tangent vector $\psi^{\prime}$ makes a constant angle $\alpha$ with a constant direction $U$, then we can write $f(\theta)=a$, where $a$ is constant value. Therefore the equation (\ref{u212}) becomes
\begin{equation}\label{u6}
\dfrac{d^3T}{d\theta^3}+(\dfrac{1}{a^2}-1)\,\dfrac{dT}{d\theta}=0.
\end{equation}
Here, there exists two cases:

{\bf Case 3.1:} $\dfrac{1}{a^2}-1\geq0$.

{\bf Case 3.2:} $\dfrac{1}{a^2}-1\leq0$.

If we change the variable $\theta$, the equation (\ref{u6}) can be written in the following form:
\begin{equation}\label{u61}
\dfrac{d^3T}{d\phi_\pm^3}\pm\dfrac{dT}{d\phi_\pm}=0, \,\,\,\,\,\phi\pm=\sqrt{\pm(\dfrac{1}{a^2}-1)}\,\,\theta.
\end{equation}
If we write the tangent vector $T=\Big(T_1, T_2, T_3\Big)$ the general solution of Eq. (\ref{u61}) takes the form
\begin{equation}\label{u7}
T(\phi\pm)=T_i(\phi\pm)e_i=
\left\{\begin{array}{ll}
&(a_i\cosh[\phi_-]+b_i\sinh[\phi_-]+c_i)e_i,\,\,\,\dfrac{1}{a^2}-1\leq0,\\
&(a_i\cos[\phi_+]+b_i\sin[\phi_+]+c_i)e_i, \,\,\,\dfrac{1}{a^2}-1\geq0,
\end{array}
\right.
,\,i=1,2,3,
\end{equation}
where $a_i, b_i, c_i\in R$ for i=1,2,3.\\

Hence the curve $\psi$ is a general helix, i.e. the tangent vector $T$ makes an constant angle $\alpha$ with the constant vector called the axis of the helix. So, with out loss of generality, we can take the axis of helix is parallel to one of the axis $x$-axis $e_1$, $y$-axis $e_2$ or $z$-axis $e_3$. Because $T$ is a timelike vector, the vector parallel to axis must be timelike vector. It is worth noting that: the only timelike vector parallel to $x$-axis. If we write $e_1=(-1,0,0)$, we have $c_1=T_1=g(T,e_1)=\cosh[\alpha]$ and $a_3=b_3=0$. The constant angle is called the hyperbolic angle between two timelike vector $T$ and $e_1$.

On other hand the tangent vector $T$ is a unit timelike vector, so ,we have the following condition
\begin{equation}\label{u8}
-T_1^2+T_2^2+T_3^2=-1,
\end{equation}
which leads to
\begin{equation}\label{u9}
\begin{array}{ll}
T_2^2+T_3^2=\sinh^2[\alpha].
\end{array}
\end{equation}
Here, we study two cases as the following:\\

{\bf Case 3.1.1}
\begin{equation}\label{u919}
\begin{array}{ll}
T=&\cosh[\alpha]e_1+\Big(a_2\cosh[\phi_+]+b_2\sinh[\phi_+]+c_2\Big)e_2\\
&+\Big(a_3\cosh[\phi_+]+b_3\sinh[\phi_+]+c_3\Big)e_3.
\end{array}
\end{equation}
The above equation can be written in the form
\begin{equation}\label{u109}
A_0+\sum_{j=1}^2\Big(A_j\cosh[j\phi_+]+B_i\sinh[j\phi_+]\Big)=0,
\end{equation}
where
\begin{equation}\label{u119}
\left\{\begin{array}{ll}
A_2&=\dfrac{1}{2}\Big(a_2^2+a_3^2+b_2^2+b_3^2\Big)\\
B_2&=a_2b_2+a_3b_3\\
A_1&=2(a_2c_2+a_3c_3)\\
B_1&=2(b_2c_2+b_3c_3)\\
A_0&=c_2^2+c_3^3-\sinh^2[\alpha]+\dfrac{1}{2}\Big[a_2^2+a_3^2-b_2^2-b_3^2\Big].
\end{array}\right.
\end{equation}
All coefficients of Eq. (\ref{u109}) must be zero, but $A_2$ is the sum of positive values, then there are no solutions in this case.\\

{\bf Case 3.2.1}
\begin{equation}\label{u91}
\begin{array}{ll}
T=&\cosh[\alpha]e_1+\Big(a_2\cos[\phi_+]+b_2\sin[\phi_+]+c_2\Big)e_2\\
&+\Big(a_3\cos[\phi_+]+b_3\sin[\phi_+]+c_3\Big)e_3.
\end{array}
\end{equation}

The above equation can be written in the form
\begin{equation}\label{u10}
A_0+\sum_{j=1}^2\Big(A_j\cos[j\phi_+]+B_i\sin[j\phi_+]\Big)=0,
\end{equation}
where
\begin{equation}\label{u11}
\left\{\begin{array}{ll}
A_2&=\dfrac{1}{2}\Big(a_2^2+a_3^2-b_2^2-b_3^2\Big)\\
B_2&=a_2b_2+a_3b_3\\
A_1&=2(a_2c_2-a_3c_3)\\
B_1&=2(b_2c_2+b_3c_3)\\
A_0&=c_2^2+c_3^3-\sinh^2[\alpha]+\dfrac{1}{2}\Big[a_2^2+a_3^2+b_2^2+b_3^2\Big].
\end{array}\right.
\end{equation}

All coefficients of Eq. (\ref{u10}) must be zero, so we have the following set of five algebraic equations in the six unknowns $a_2, b_2, c_2, a_3, b_3$ and $c_3$.
\begin{equation}\label{u12}
A_i=0,\,\,\, \forall\,\,\, i=1,2,...,5.
\end{equation}
Solving the five algebraic equations above, with the aid of mathematica programm, we obtain four cases of solutions as the following:
\begin{equation}\label{u13}
\left\{\begin{array}{ll}
c_2=c_3=0,\,\,\,\,b_3=a_2,\,\,\,\, a_3=-b_2=\pm\sqrt{\sinh^2[\alpha]-a_2^2}\\
c_2=c_3=0,\,\,\,\,b_3=-a_2,\,\,\,\, a_3=b_2=\pm\sqrt{\sinh^2[\alpha]-a_2^2}.
\end{array}\right.
\end{equation}
The four cases above lead to one general form solution and equation (\ref{u7}) takes the following form:
\begin{equation}\label{u14}
\begin{array}{ll}
T(\phi)=&\cosh[\alpha]e_1+\Big(a_2\cos[\phi_+]+\sqrt{\sinh^2[\alpha]-a_2^2}\sin[\phi_+]\Big)e_2\\
&+
\Big(\sqrt{\sinh^2[\alpha]-a_2^2}\cos[\phi]-a_2\sin[\phi]\Big)e_3.
\end{array}
\end{equation}

The above equation can be written in the following form:
\begin{equation}\label{u15}
\begin{array}{ll}
T(\phi)=\Big(\cosh[\alpha],\sinh[\alpha]\cos[\phi_+-\varepsilon],
\sinh[\alpha]\sin[\phi_+-\varepsilon]\Big).
\end{array}
\end{equation}
where $\varepsilon=\arctan^{-1}\Big[\sqrt{\dfrac{\sinh^2[\alpha]}{a_2^2}-1}\Big]$. Without loss of generality we can write:
\begin{equation}\label{u16}
\begin{array}{ll}
T(\phi)=\Big(\cosh[\alpha],\sinh[\alpha]\cos[\phi_+],
\sinh[\alpha]\sin[\phi_+]\Big).
\end{array}
\end{equation}
By differentiation the above equation with respect to $s$, we have
\begin{equation}\label{u162}
\begin{array}{ll}
N(\phi)=\Big(0,-\sin[\phi_+],\cos[\phi_+]\Big),
\end{array}
\end{equation}
\begin{equation}\label{u163}
\begin{array}{ll}
\dfrac{d\phi}{ds}={\mathrm{csch}}[\alpha]\kappa(s).
\end{array}
\end{equation}

The binormal vector $B$ takes the form:
\begin{equation}\label{u164}
\begin{array}{ll}
B(\phi)=-\Big(\sinh[\alpha],\cosh[\alpha]\cos[\phi_+],\cosh[\alpha]\sin[\phi_+]\Big).
\end{array}
\end{equation}

By differentiation the above equation with respect to $s$, we have
\begin{equation}\label{u165}
\begin{array}{ll}
\dfrac{d\phi}{ds}={\text{sech}}[\alpha]\tau(s).
\end{array}
\end{equation}
Here the constant $a$ must be equal $a=\dfrac{\kappa}{\tau}=\coth[\alpha]$ and so $\phi_+(s)=\sqrt{\dfrac{1}{a^2}-1}\int\kappa(s)ds={\text{csch}}[\alpha]\int\kappa(s)ds$.\\

Now, integrating the equation (\ref{u16}) with respect to $s$ we have the two equations (\ref{u211}) and (\ref{u212}). Thus, we complete proof of the theorem.
\end{proof}
\begin{corollary}
There are not a time-like general helix with $\Big|\dfrac{\tau}{\kappa}\Big|<1$.
\end{corollary}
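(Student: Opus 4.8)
The plan is to obtain the corollary as an immediate by-product of the case analysis already carried out in the proof of the previous theorem. Recall that for a time-like general helix the ratio $a:=\kappa/\tau$ is a finite non-zero constant, that the tangent $T$ satisfies the reduced equation (\ref{u6}), $\tfrac{d^{3}T}{d\theta^{3}}+(\tfrac1{a^{2}}-1)\tfrac{dT}{d\theta}=0$ in the variable $\theta=\int\kappa\,ds$, and that the axis of the helix is a constant timelike vector which may be normalised to $e_1=(-1,0,0)$, so that $T_1=g(T,e_1)=\cosh[\alpha]$ is constant and $a_1=b_1=0$ in (\ref{u7}).

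The first step is the purely algebraic observation that $\bigl|\tfrac{\tau}{\kappa}\bigr|<1$ is the same as $a^{2}=\bigl(\tfrac{\kappa}{\tau}\bigr)^{2}>1$, i.e. as $\tfrac1{a^{2}}-1<0$. In this regime (\ref{u61}) is the lower-sign equation $\tfrac{d^{3}T}{d\phi_-^{3}}-\tfrac{dT}{d\phi_-}=0$, whose general solution is the \emph{hyperbolic} branch of (\ref{u7}), $T_i=a_i\cosh[\phi_-]+b_i\sinh[\phi_-]+c_i$. Hence a time-like general helix with $|\tau/\kappa|<1$ would necessarily have $T$ of the form (\ref{u919}) with $T_1=\cosh[\alpha]$. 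Imposing that $T$ be a unit timelike vector, $-T_1^{2}+T_2^{2}+T_3^{2}=-1$, i.e. $T_2^{2}+T_3^{2}=\sinh^{2}[\alpha]$, and expanding the left-hand side in the functions $1,\cosh[j\phi_-],\sinh[j\phi_-]$ for $j=1,2$, the coefficient of $\cosh[2\phi_-]$ is $A_2=\tfrac12\bigl(a_2^{2}+a_3^{2}+b_2^{2}+b_3^{2}\bigr)$. Since the right-hand side is constant, this coefficient must vanish, and being a sum of squares it can do so only if $a_2=a_3=b_2=b_3=0$; but then $T$ is a constant vector, so $T'=\kappa N=0$ forces $\kappa\equiv 0$, contradicting the standing hypothesis of non-vanishing curvature.

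Consequently no time-like general helix satisfies $\bigl|\tfrac{\tau}{\kappa}\bigr|<1$, which is the assertion. (Equivalently, and perhaps more transparently, one may differentiate $g(T,U)=\mathrm{const}$ to see that the axis $U$ is parallel to $T+\tfrac{\kappa}{\tau}B$, whose causal character is governed by $g\bigl(T+\tfrac{\kappa}{\tau}B,\,T+\tfrac{\kappa}{\tau}B\bigr)=-1+\tfrac{\kappa^{2}}{\tau^{2}}$, so that requiring the axis to be timelike already forces $\kappa^{2}/\tau^{2}<1$.) I do not expect any genuine obstacle: the only points needing care are the reciprocal-and-sign bookkeeping that matches $|\tau/\kappa|<1$ with the $\cosh/\sinh$ branch of (\ref{u7}) rather than the $\cos/\sin$ branch that produced the helices of the theorem, and the observation that the degenerate solution $T\equiv\mathrm{const}$ is excluded precisely because $\kappa$ and $\tau$ are assumed nowhere zero; both are already implicit in the proof of the theorem, so the statement is a corollary in the most literal sense.
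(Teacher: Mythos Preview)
Your proposal is correct and follows essentially the same route as the paper: the corollary is read off from the hyperbolic sub-case of the theorem's proof, where the coefficient $A_2=\tfrac12(a_2^2+a_3^2+b_2^2+b_3^2)$ must vanish, leaving no admissible tangent. You are slightly more careful than the paper in explicitly disposing of the degenerate possibility $a_2=a_3=b_2=b_3=0$ via $\kappa\neq 0$, and your parenthetical argument through the causal character of $T+\tfrac{\kappa}{\tau}B$ is a genuinely shorter alternative that the paper does not mention.
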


\section{Examples}
In this section, we take several choices for the curvature $\kappa$ and torsion $\tau$, and next apply theorem 3.4.\\

{\bf Example 1.} The case of a curve when both of the curvature and torsion are constants (W-curve), i.e., $\kappa=a\,\sinh[\alpha],\,\tau=a\,\cosh[\alpha]$. Then position vector takes the form:
\begin{equation}\label{u18}
\begin{array}{ll}
\psi(s)=\sinh[\alpha]\int\Big(\coth[\alpha],
\cos[a\,s], \sin[a\,s]\Big)ds+C.
\end{array}
\end{equation}

Integrating the above equation and putting $s=\dfrac{\phi}{a}$, we obtain the parametric representation of the W-curve as the following:
\begin{equation}\label{u19}
\begin{array}{ll}
\psi(\theta)=\dfrac{\sinh[\alpha]}{a}\Big(\coth[\alpha]\,\theta,\sin[\theta], -\cos[\theta]\Big)+C.
\end{array}
\end{equation}

{\bf Example 2.} The case of a general helix with $\kappa=\dfrac{\sinh[\alpha]}{s}$ and $\tau=\dfrac{\cosh[\alpha]}{s}$. Then position vector takes the form:
\begin{equation}\label{u20}
\begin{array}{ll}
\psi=\sinh[\alpha]\int\exp[\phi]\Big(\coth[\alpha], \cos[\phi], \sin[\phi]\Big)d\phi+C,
\end{array}
\end{equation}
where $s=\exp[\phi]$. Integrating the above equation, we obtain the parametric representation of this curve as the following:

\begin{equation}\label{u21}
\begin{array}{ll}
\psi=\dfrac{\sinh[\alpha]}{2}\exp[\phi]\Big(2\coth[\alpha], \sin[\phi]+\cos[\phi], \sin[\phi]-\cos[\phi]\Big)+C.
\end{array}
\end{equation}

{\bf Example 3.} The case of a general helix with $\kappa=\dfrac{\sinh[\alpha]}{s^2+1}$ and $\tau=\dfrac{\cosh[\alpha]}{s^2+1}$. Then position vector takes the form:
\begin{equation}\label{u201}
\begin{array}{ll}
\psi=\sinh[\alpha]\int\exp[\phi]\Big(\coth[\alpha]\sec^2[\phi], \sec[\phi], \sec[\phi]\tan[\phi]\Big)d\phi+C,
\end{array}
\end{equation}
where $s=\tan[\phi]$. Integrating the above equation, we obtain the parametric representation of this curve as the following:
\begin{equation}\label{u212}
\begin{array}{ll}
\psi=\sinh[\alpha]\Big(\coth[\alpha]\tan[\phi], \ln\Big[\sec[\phi]+\tan[\phi]\Big], \sec[\phi]\Big)+C.
\end{array}
\end{equation}

\begin{figure}[htt]
\begin{center}
\includegraphics[width=4.5cm]{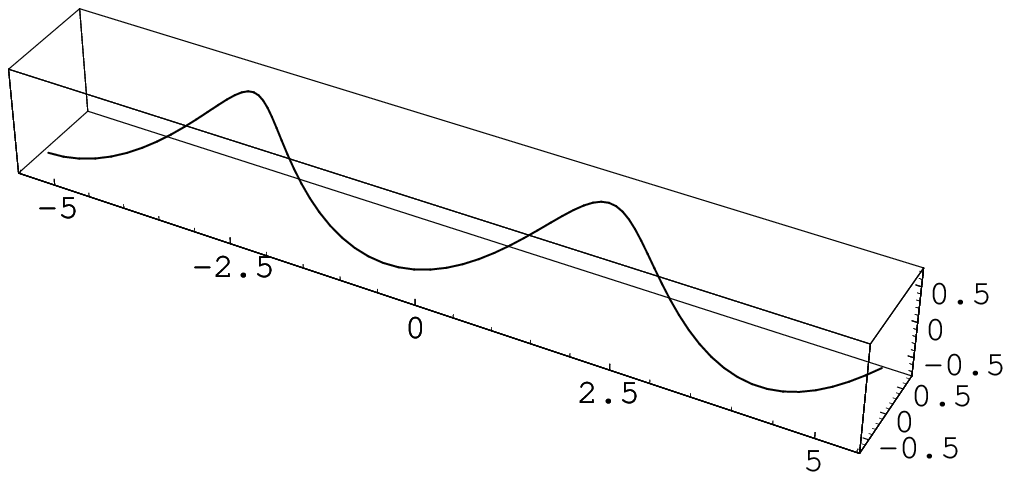}
\hspace*{0.5cm}
\includegraphics[width=3cm]{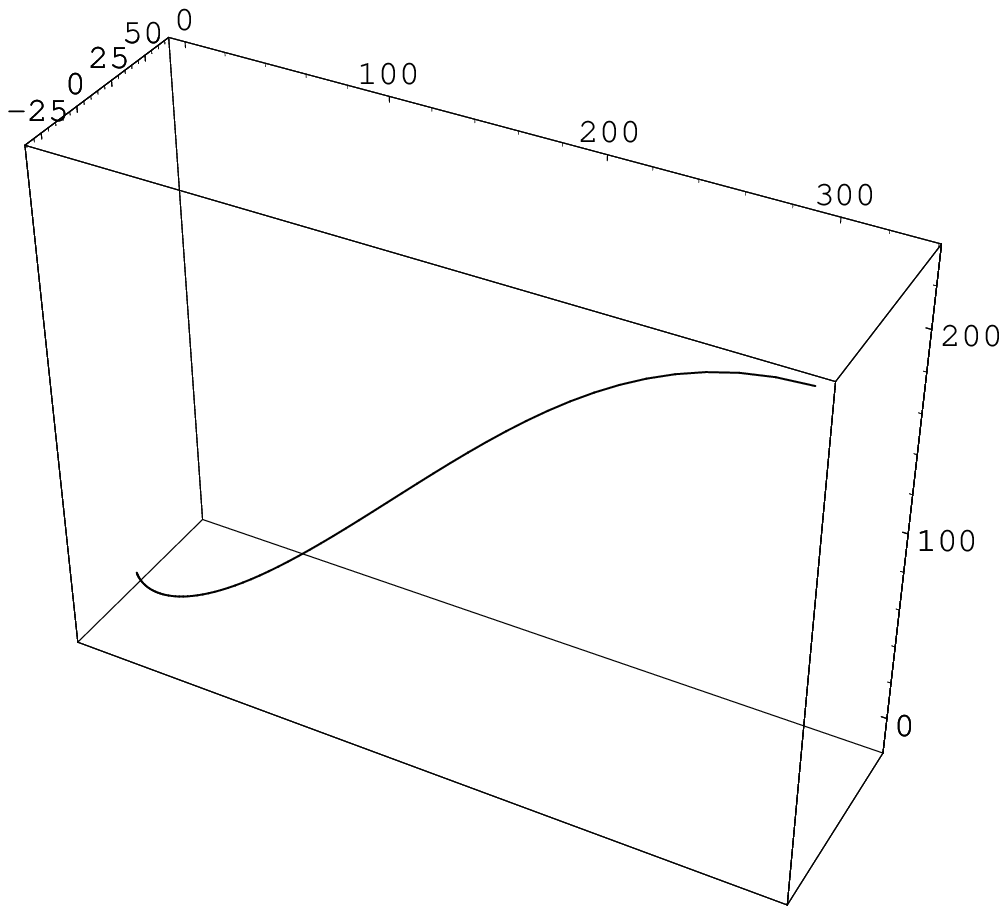}
\hspace*{0.5cm}
\includegraphics[width=3cm]{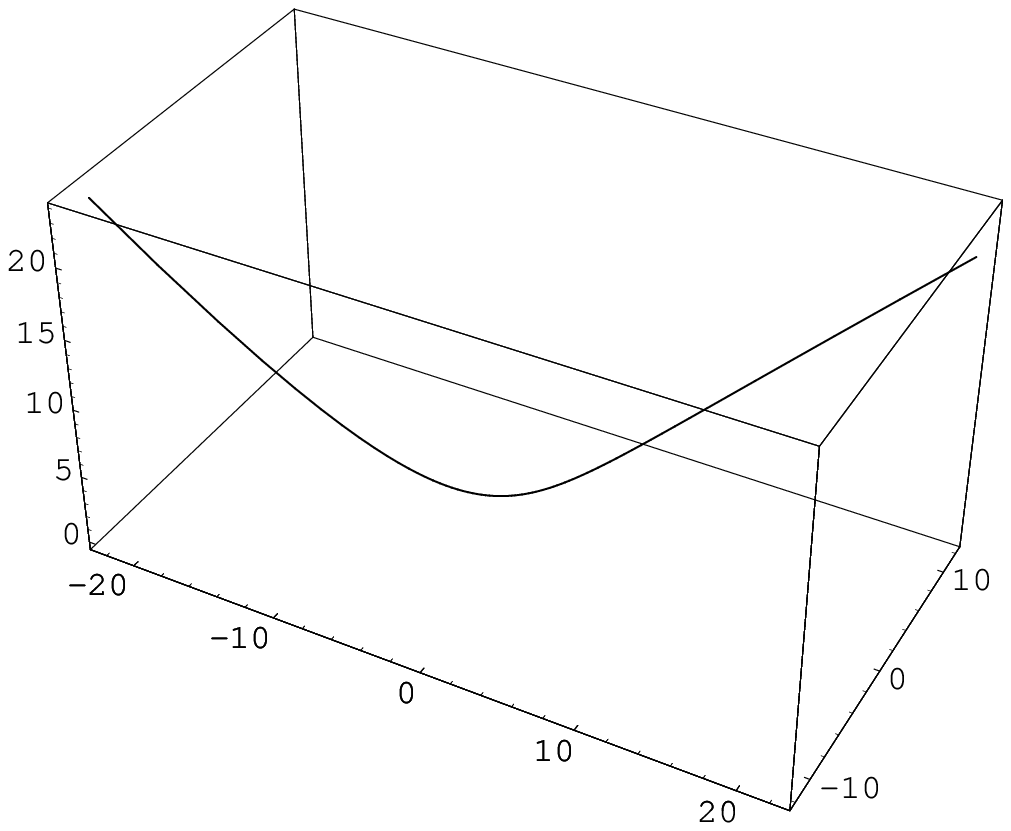}
\end{center}\caption{Time-like helices corresponding to examples 1,2 and 3.}
\label{fig-x}
\end{figure}


\begin{thebibliography}{99}
\bibitem{cci} Camc\i, \c{C}., \.{I}larslan, K., Kula, L.  and Hac\i saliho\u{g}lu, H. H. Harmonic curvatures and generalized helices in $E^4$, \textit{Chaos, Solitons
and Fractals}, 2007, doi:10.1016/j.chaos.2007.11.001 \textit{in press.}

\bibitem{chen} Chen, B. Y. When does the position vector of a space curve always lie in its rectifying plane?, \textit{Amer. Math. Mounthly}, 2003, \textbf{110}, 147-152.

\bibitem{Cgj} Chouaieb, N., Goriely, A. and Maddocks, J.H. Helices \textit{PNAS}, 2006, \textbf{103}, 398–403.

\bibitem{cook} Cook T. A. \textit{The curves of life}, Constable, London – 1914; Reprinted (Dover, London – 1979).

\bibitem{eisenh} Eisenhart LP, A Treatise on the Differential Geometry of Curves and Surfaces, Ginn and Co., 1909.

\bibitem{Il1} \.{I}larslan, K. Space-like Normal Curves in Minkowski Space E$_{1}^{3}$, \textit{Turk J. Math.}, 2005, \textbf{29}, 53-63.

\bibitem{Il2} \.{I}larslan, K. and Boyac\i o\u{g}lu, \"{O}. Position Vectors of a Space-like $W$-curve in Minkowski Space E$_1^3$, \textit{Bull. Korean Math. Soc.}  2007, \textbf{44}, 429-438.

\bibitem{Il3} \.{I}larslan, K. and Boyac\i o\u{g}lu, \"{O}. Position vectors of a timelike and a null helix in Minkowski 3-space, \textit{Chaos, Solitons and Fractals}, 2008, \textbf{38}, 1383-1389.

\bibitem{lips} Lipschutz MM. Schum$^,$s Outline of Theory and Problems of Differential Geometry. McGraw-Hill Book Company, New York, 1969.

\bibitem{Onei} O'Neill, B. {\it Semi-Riemannian Geometry}, Academic Press, New York, 1983.

\bibitem{Tur} Turgut, M.  and Y\i lmaz, S. Contributions to Classical Differential Geometry of the Curves in E$^3$, \textit{Sci. Magna}, 2008, \textbf{4}, 5-9.

\bibitem{Yil} Y\i lmaz, S. Determination of Space-like curves by Vector Differential Equations in Minkowski space E$_1^3$, J. Adv. Res. Pure Math. vol. 1 no. 1 pp. 10-14, 2009.

\bibitem{walfare} Walfare J. Curves and Surfaces in Minkowski Space. PhD thesis, K.U. Leuven, Faculty of Science, Leuven, 1995.

\bibitem{Wc} Watson J. D. and Crick F. H. Molecular structures of nucleic acids. \textit{Nature}, 1953, \textbf{171}, 737–8.

\end{thebibliography}
\end{document}